\makeatletter\@addtoreset{equation}{section}\makeatother
\renewcommand{\phi}{\varphi}
\newcommand{\R}{\mathbb R}
\newcommand{\N}{\mathbb N}
\newcommand{\E}{\mathbb E}
\renewcommand{\P}{\mathbb P}
\newtheorem{theorem}{Theorem}[section]
\newtheorem{lemma}[theorem]{Lemma}
\newtheorem{prop}[theorem]{Proposition}
\def\1{{\mathchoice {1\mskip-4mu\mathrm l}      
{1\mskip-4mu\mathrm l}
{1\mskip-4.5mu\mathrm l} {1\mskip-5mu\mathrm l}}}
\renewcommand{\subsection}{\secdef \subsct\sbsect}
\newcommand{\subsct}[2][default]{\refstepcounter{subsection}
\vspace{0.15cm}
{\flushleft\bf \arabic{section}.\arabic{subsection}~\bf #1  }
\nopagebreak\nopagebreak}
\newcommand{\sbsect}[1]{\vspace{0.1cm}\noindent
{\bf #1}\vspace{0.1cm}}
\newcounter{remnr}
\newenvironment{remark}{\refstepcounter{remnr}
{\sf Remark~\arabic{remnr}.\ }\nopagebreak  }%
{\nopagebreak {\hfill{$\diamond$}}\\ }
\renewcommand{\phi}{\varphi}
\renewcommand{\P}{\mathbb{P}}
\renewcommand{\E}{\mathbb{E}}
\begin{document}

\title{Time-dependent P\' olya urn}

\author[Nadia Sidorova]{}

\maketitle

\centerline{\sc Nadia Sidorova\footnote{Department of Mathematics, University College London, Gower Street, London WC1 E6BT, UK, {\tt n.sidorova@ucl.ac.uk}.
} }

\vspace{0.4cm}


\vspace{0.4cm}

\begin{quote}
{\small {\bf Abstract:} We consider a time-dependent version of a P\' olya urn containing black and white balls. At each time $n$ a ball is drawn from the urn at random and replaced in the urn along with $\sigma_n$ 
additional balls of the same colour. The proportion of white balls converges almost surely to a random limit $\Theta$, and $\mathcal{D}=\{\Theta\in\{0,1\}\}$ denotes the event when one of the colours dominates. 
The phase transition, in terms of the sequence $(\sigma_n)$, 
between the regimes $\P(\mathcal{D})=1$ and $\P(\mathcal{D})<1$
was found in~\cite{P}. We describe the phase transition between the cases 
$\P(\mathcal{D})=0$ and $\P(\mathcal{D})>0$. Further, we study the stronger monopoly event 
$\mathcal{M}$ when one of the colours eventually stops reappearing, and   
analyse the phase transition between the regimes $\P(\mathcal{M})=0$, $\P(\mathcal{M})\in (0,1)$, and $\P(\mathcal{M})=1$.}
\end{quote}
\vspace{5ex}

{\small {\bf AMS Subject Classification:} Primary 60G42.
Secondary 60G20, 05D40.

{\bf Keywords:} P\' olya urn, urn models, reinforcement, martingale limit, monopoly, domination.}
\vspace{4ex}



\section{Introduction}
\medskip

%

The Polya urn model was introduced in 1923 by Eggenberger and P\' olya, \cite{EP}, and is a classical example of a random process with reinforcement, \cite{Ps}.  
The model has an urn containing black and white balls.  
At each time $n$ a ball is drawn from the urn at random and replaced in the urn along with another ball of the same colour. 
It is well-known that the proportion 
of the balls of each colour converges almost surely to a random variable, 
which has Beta distribution, \cite{F}. 
\smallskip

We consider a \emph{time-dependent} urn, introduced in~\cite{P}, where the number of balls added at time $n$ is no longer one but 
is a function of $n$. 
Let $(\sigma_n)$ be a positive sequence representing the number of added balls. 
Denote by $\tau_0$ the initial number of balls in the urn and, for each $n$, let 
\begin{align*}
\tau_n=\tau_0+\sum_{i=1}^n\sigma_i
\end{align*}
be the total number of balls at time $n$. 

Denote by $T_0$ the initial deterministic number of white balls.
Given that the urn contains $T_{n}$ white balls at time $n$, we define 
\begin{align}
\label{def_t}
T_{n+1}=T_{n}+\sigma_{n+1} I_{n+1},
\end{align}  
where $I_{n+1}$ is a Bernoulli random variable with parameter 
\begin{align}
\label{def_th}
\Theta_n=\frac{T_n}{\tau_n},
\end{align}
otherwise independent of $\mathcal{F}_n=\sigma(I_1,\dots,I_n)$. 
\smallskip

In the urn context, it is natural to think of $(\sigma_n)$ as an integer-valued sequence. However, 
the random processes $(T_n)$ and $(\Theta_n)$ are well-defined for real-valued positive $(\sigma_n)$. 
In the sequel we allow $(\sigma_n)$ to take non-integer values
and use~\eqref{def_t} and~\eqref{def_th} as the definitions of the processes $(T_n)$ and $(\Theta_n)$, respectively.
\smallskip

It follows from the same martingale argument as for the standard P\' olya urn that the proportion $\Theta_n$ of white balls  
converges almost surely to a random variable $\Theta$. However, little is known about the distribution of $\Theta$. It was shown in~\cite{P} that 
$\P(\Theta=0)+\P(\Theta=1)=1$ if and only if 
\begin{align}
\label{serser}
\sum_{n=0}^{\infty}\Big(\frac{\sigma_{n+1}}{\tau_n}\Big)^2=\infty,
\end{align}
that is, if and only if $(\sigma_n)$ grows sufficiently fast. 
However, the regime when  
\begin{align}
\label{serser1}
\sum_{n=0}^{\infty}\Big(\frac{\sigma_{n+1}}{\tau_n}\Big)^2<\infty
\end{align}
has not been well understood. 
It is only known that  $\P(\Theta=0)=\P(\Theta=1)=0$ if $(\sigma_n)$ is bounded, see~\cite{P}, 
and, obviously, if $(\sigma_n)$ is decaying so fast that $(\tau_n)$ converges. 
The only other result available  for this regime, again proved in~\cite{P},  is that 
$\Theta$ has no atoms in $(0,1)$. 
\smallskip

We denote by 
\begin{align*}
\mathcal{D}=\big\{\Theta=0\big\}\cup \big\{\Theta=1\big\}
\end{align*}
the event that eventually the number of balls of one colour is  negligible with respect to the number of balls of the other colour,
and call this event~\emph{domination}. Further, we denote by 
\begin{align*}
\mathcal{M}=\big\{I_n=0\text{ eventually for all }n\big\}\cup \big\{I_n=1\text{ eventually for all }n\big\}
\end{align*}
the event that eventually only balls of one colour are added to the urn, and call this event~\emph{monopoly}. 
We have 
\begin{align*}
\mathcal{M}\subset \mathcal{D}.
\end{align*} 
This is obvious if $\tau_n\to\infty$ and will be 
shown in Lemma~\ref{l:ml} for the case when $\tau_n$ converges. 
\smallskip

As discussed above, in this terminology~\eqref{serser} implies $\P(\mathcal{D})=1$ while~\eqref{serser1} implies $\P(\mathcal{D})<1$. The aim of the paper is to find a phase transition between the cases $\P(\mathcal{D})>0$ and $\P(\mathcal{D})=0$ in the regime~\eqref{serser1}.  
This phase transition is closely related to that between
$\P(\mathcal{M})>0$ and $\P(\mathcal{M})=0$, which we also describe.  

\begin{theorem} 
\label{t:con}
Suppose 
\begin{align}
\label{new5}
\sum_{n=1}^{\infty}\frac{1}{\tau_n}<\infty.
\end{align}
Then $0<\P(\mathcal{M})\le \P(\mathcal{D})$. 
\smallskip

If we additionally assume
\begin{align}
\label{new6}
\liminf_{n\to\infty}\frac{\sigma_n}{\tau_n}>0
\end{align}
then $\P(\mathcal{M})=\P(\mathcal{D})=1$.
%
%
\end{theorem}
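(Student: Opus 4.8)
The plan is to reduce everything to the one-step recursion for the white proportion. Writing $p_n=\sigma_{n+1}/\tau_{n+1}\in(0,1)$, the definitions \eqref{def_t}--\eqref{def_th} give
\[
\Theta_{n+1}=(1-p_n)\,\Theta_n+p_n\,I_{n+1},\qquad I_{n+1}\mid\mathcal F_n\sim\mathrm{Bernoulli}(\Theta_n),
\]
so that each draw moves $\Theta_n$ towards $I_{n+1}\in\{0,1\}$ by the increment $p_n(I_{n+1}-\Theta_n)$. The decisive structural fact I would extract is a \emph{jump-up} property: if a black ball is added at step $n+1$ (i.e. $I_{n+1}=0$) then $1-\Theta_{n+1}=(1-p_n)(1-\Theta_n)+p_n\ge p_n$, and symmetrically $\Theta_{n+1}\ge p_n$ if a white ball is added. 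Hence the minority colour's proportion cannot remain small while that colour keeps reappearing, provided $p_n$ stays bounded away from $0$.

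For the first assertion I would exhibit an explicit positive-probability event on which only one colour is ever added. Assuming the non-degenerate configuration $0<T_0<\tau_0$ (otherwise $\P(\mathcal M)=1$ trivially), consider $B=\{I_n=0\text{ for all }n\ge 1\}$. On $B$ the white count is frozen at $T_0$, so the conditional draw probabilities telescope to
\[
\P(B)=\prod_{n=0}^{\infty}\Big(1-\frac{T_0}{\tau_n}\Big).
\]
Every factor is strictly positive (since $T_0<\tau_0\le\tau_n$), and $\sum_n T_0/\tau_n<\infty$ by \eqref{new5}, so the product converges to a strictly positive value. Thus $B\subset\mathcal M$ has positive probability, giving $\P(\mathcal M)>0$; the inequality $\P(\mathcal M)\le\P(\mathcal D)$ is just the inclusion $\mathcal M\subset\mathcal D$ recorded above, valid since \eqref{new5} forces $\tau_n\to\infty$.

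For the second assertion, note first that \eqref{new6} yields $\liminf_n p_n>0$: fixing $c\in(0,\liminf_n\sigma_n/\tau_n)$ there is $N_0$ with $p_n\ge c$ for all $n\ge N_0$ (this also makes $\tau_n$ grow geometrically, so \eqref{new5} is automatic). I would first establish $\P(\mathcal D)=1$. Since $|\Theta_{n+1}-\Theta_n|=p_n|I_{n+1}-\Theta_n|\ge c\min(\Theta_n,1-\Theta_n)$ for $n\ge N_0$, a limit in $(\varepsilon,1-\varepsilon)$ is incompatible with the almost-sure convergence $\Theta_{n+1}-\Theta_n\to0$; hence $\Theta\in\{0,1\}$ a.s. (Alternatively, $\sigma_{n+1}/\tau_n\ge p_n\ge c$ makes \eqref{serser} hold, so one may instead cite the domination result of \cite{P}.) To upgrade domination to monopoly I invoke the jump-up property. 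On $\{\Theta=1\}$ we have $1-\Theta_n\to0$, so there is a random $N_1\ge N_0$ with $1-\Theta_n<c$ for all $n\ge N_1$; but if a black ball were added at some step $n+1>N_1$, then $1-\Theta_{n+1}\ge p_n\ge c$, a contradiction. Therefore $I_n=1$ eventually on $\{\Theta=1\}$, and by symmetry $I_n=0$ eventually on $\{\Theta=0\}$. As $\P(\mathcal D)=1$, these two events cover a full-measure set, so $\P(\mathcal M)=1$, and then $\P(\mathcal D)=1$ again via $\mathcal M\subset\mathcal D$.

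The main obstacle is conceptual rather than computational: it is the passage from the \emph{limit} statement $\Theta\in\{0,1\}$ (domination) to the \emph{pathwise} statement that the minority colour ceases to appear (monopoly). The jump-up property is exactly what bridges this gap, and it is available only because \eqref{new6} keeps $p_n$ bounded away from $0$; without such a lower bound the minority colour could reappear with ever-diminishing effect on $\Theta_n$, and domination would not force monopoly. I would additionally take care over the harmless degenerate initial configurations and over the elementary form of $\mathcal M\subset\mathcal D$, which relies on $\tau_n\to\infty$ under \eqref{new5}.
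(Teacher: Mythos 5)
Your proof is correct and takes essentially the same route as the paper: the same infinite product $\prod_{n}\bigl(1-\frac{T_0}{\tau_n}\bigr)>0$ establishes $\P(\mathcal{M})>0$ under \eqref{new5}, and your jump-up bound ($\Theta_{n+1}\ge p_n$, resp.\ $1-\Theta_{n+1}\ge p_n$, whenever a white, resp.\ black, ball is added) is precisely the paper's observation $\Theta_{\eta_n}\ge \sigma_{\eta_n}/\tau_{\eta_n}$ used to show $\mathcal{M}^c\subset\mathcal{D}^c$ under \eqref{new6}. The only minor difference is your self-contained oscillation argument for $\P(\mathcal{D})=1$, where the paper simply notes that \eqref{new6} implies \eqref{serser} and invokes \cite{P} --- a route you also offer as an alternative.
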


\begin{remark} 
Condition~\eqref{new5} is satisfied by sequences $(\sigma_n)$ growing like $(\log n)^{\alpha}$, $\alpha>1$, or faster. Theorem~\ref{t:con} means that for such sequences we have monopoly and hence domination occurring with positive probability. 
In particular, for such sequences growing slower than $e^{\sqrt n}$ 
condition~\eqref{serser1} is satisfied and we have $0<\P(\mathcal{M})\le \P(\mathcal{D})<1$.
On the other hand, for sequences growing like
$e^{\sqrt n}$ or faster condition~\eqref{serser} holds and we have
$0<\P(\mathcal{M})\le \P(\mathcal{D})=1$.
\end{remark}

\begin{remark}
Condition~\eqref{new6} is satisfied by sequences $(\sigma_n)$ growing exponentially or faster. Such sequences satisfy~\eqref{serser}, which immediately implies $\P(\mathcal{D})=1$.  The aim of the second statement of the theorem 
is to show that for fast-growing sequences not just domination but also monopoly  
occurs almost surely.  
\end{remark}

\begin{remark} We do not know anything about the event $\mathcal{D}\backslash \mathcal{M}$. Understanding this event is equivalent to understanding the event
\begin{align*}
\big\{\Theta= 0\big\}\backslash 
\Big\{\sum_{n=1}^{\infty}\Theta_n<\infty\Big\}.
\end{align*}
This requires the knowledge about the rate of convergence of $\Theta_n$ to zero, which is currently beyond our ken. 
\end{remark}

For two positive sequences $(a_n)$ and $(b_n)$, we say that 
$a_n\asymp b_n$ as $n\to\infty$ if the sequence $\big(\frac{a_n}{b_n}\big)$ is bounded away from zero and infinity.

\begin{theorem} 
\label{t:div}
Suppose
\begin{align}
\label{new2}
\sum_{n=1}^{\infty}\frac{1}{\tau_n}=\infty.
\end{align}
Then $\P(\mathcal{M})=0$.
\smallskip

Suppose that additionally the following regularity conditions are satisfied:
\begin{itemize}
\item[(1)] As $n\to\infty$,
\begin{align}
\label{rc1}
\frac{\sigma_n}{\tau_n}\asymp \frac 1 n.
\end{align}
\item[(2)] There exists $g:\N\to \N$ satisfying $g(n)\to\infty$ as $n\to\infty$ and $\alpha,\beta>0$ such that 
\begin{align}
\label{rc2}
\alpha<\frac{\sigma_i}{\sigma_n}<\beta
\end{align}
for all $n$ and all $n\le i\le ng(n)$.
\end{itemize}
Then
$\P(\mathcal{D})=0$.
\end{theorem}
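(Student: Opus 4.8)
The plan is to prove $\P(\mathcal{D})=0$ by showing $\P(\Theta=0)=0$; the equality $\P(\Theta=1)=0$ then follows from the black/white symmetry of the model (the dynamics \eqref{def_t}--\eqref{def_th} and all the hypotheses \eqref{new2}, \eqref{rc1}, \eqref{rc2} are invariant under $T_n\mapsto\tau_n-T_n$), and $\P(\mathcal{D})=\P(\Theta=0)+\P(\Theta=1)=0$. First I would invoke the first part of the theorem: since $\P(\M)=0$, almost surely both colours are drawn infinitely often. On $\{\Theta=0\}$ white cannot be drawn only finitely often, as that would place us in $\M$, so it suffices to prove $\P(\{\Theta=0\}\cap\{I_n=1\text{ i.o.}\})=0$. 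By L\'evy's conditional Borel--Cantelli lemma applied to $\P(I_{n+1}=1\mid\mathcal{F}_n)=\Theta_n$, the event $\{I_n=1\text{ i.o.}\}$ agrees up to a null set with $\{\sum_n\Theta_n=\infty\}$, so the target is $\P(\Theta=0,\ \sum_n\Theta_n=\infty)=0$ --- exactly the regime of slow decay singled out in the remark on $\mathcal{D}\setminus\M$.

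The two regularity conditions fix the natural scale. By \eqref{rc1} we have $\sigma_n/\tau_n\asymp 1/n$, which yields the \emph{floor} $\Theta_n\ge\sigma_n/\tau_n\ge c/n$ at every time $n$ at which white is drawn, and identifies $n\Theta_n$ (equivalently $T_n/\sigma_n$, the white mass measured in units of one addition) as the correct order parameter, the critical scale being $\Theta_n\asymp 1/n$. I would record the elementary martingale identities $\E[\Theta_{n+1}\mid\mathcal{F}_n]=\Theta_n$ and $\mathrm{Var}(\Theta_{n+1}\mid\mathcal{F}_n)=(\sigma_{n+1}/\tau_{n+1})^2\Theta_n(1-\Theta_n)\asymp\Theta_n/n^2$, so that the relative conditional variance is $\asymp 1/(n\Theta_n)$; this already shows that once $n\Theta_n$ is large the fluctuations of $\Theta$ are relatively small.

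The core of the argument uses \eqref{rc2} through the multiplicative windows defined by $n_{j+1}=n_jg(n_j)$. On each window $[n_j,n_{j+1}]$ the increments satisfy $\alpha\sigma_{n_j}<\sigma_i<\beta\sigma_{n_j}$, so the urn can be sandwiched between two ordinary (constant-increment) P\'olya urns with increments $\alpha\sigma_{n_j}$ and $\beta\sigma_{n_j}$, whose limiting proportions are Beta-distributed and in particular carry no atom at $0$. Because \eqref{rc1} forces the total mass $\tau$ to grow over the window by a factor $\asymp g(n_j)\to\infty$, the window is long enough for the windowed proportion to realise a fixed fraction of this limiting law. I would extract from this comparison a lower bound, uniform in the configuration at $n_j$ on the critical scale, of the form $\P\big(n_{j+1}\Theta_{n_{j+1}}\ge\kappa\,n_j\Theta_{n_j}\mid\mathcal{F}_{n_j}\big)\ge p$ for fixed $\kappa>1$ and $p>0$; the matching first-moment identity $\E[n_{j+1}\Theta_{n_{j+1}}\mid\mathcal{F}_{n_j}]=g(n_j)\,n_j\Theta_{n_j}$ makes such a multiplicative gain the typical behaviour. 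Together with the floor $n\Theta_n\ge c$ sustained by the infinitely many white draws, the conditional Borel--Cantelli lemma then forces $n_j\Theta_{n_j}$ to increase by the factor $\kappa$ infinitely often, and in fact to tend to infinity; feeding this back into the relative-variance bound $\asymp 1/(n\Theta_n)$, which becomes summable along the relevant times, pins $\Theta$ to a strictly positive limit and contradicts $\Theta=0$. This yields $\P(\Theta=0,\ \sum_n\Theta_n=\infty)=0$ and completes the proof.

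The main obstacle is the quantitative window comparison of the third paragraph: turning the asymptotic statement that the Beta limit has no atom at $0$ into an effective lower bound valid over a single finite, though multiplicatively long, window with non-constant increments. Three points must be controlled: the error between the variable increments $\sigma_i$ and a constant addition, handled by the sandwiching from \eqref{rc2}; the fact that $g(n)\to\infty$ gives the window enough length to see an order-one share of the limiting fluctuations, which is precisely why \eqref{rc2}, and not merely \eqref{rc1}, is needed; and the uniformity of the lower bound $p$ over all admissible entry configurations on the critical scale, so that it survives the conditional Borel--Cantelli step. Condition \eqref{rc1} enters twice more, fixing the scale $\Theta\asymp 1/n$ and supplying the floor at white draws, while \eqref{new2} is used only through the already-established $\P(\M)=0$.
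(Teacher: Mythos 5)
Your overall skeleton---critical scale $\Theta_n\asymp 1/n$ from \eqref{rc1}, windows of multiplicative length $g$ from \eqref{rc2}, a floor at white-draw times, and a final ``large $n\Theta_n$ implies small relative fluctuations, hence no atom at $0$'' step---matches the paper's strategy, but the load-bearing step of your argument has a genuine flaw. Your window comparison sandwiches the process between two \emph{symmetric} constant-increment P\'olya urns (both colours receiving $\alpha\sigma_{n_j}$, resp.\ $\beta\sigma_{n_j}$) and invokes their Beta limits. No such one-sided coupling exists: to stochastically dominate the white proportion from below you must give white the \emph{small} increment $\alpha\sigma_{n_j}$ and black the \emph{large} one $\beta\sigma_{n_j}$ (otherwise the monotone coupling breaks on black draws), and that unequal-increment urn is not a P\'olya urn --- by Rubin's exponential embedding the smaller-increment colour's proportion tends to $0$ almost surely, so there is no Beta law and no ``no atom at $0$'' to cite. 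The uniform finite-window bound $\P(n_{j+1}\Theta_{n_{j+1}}\ge\kappa\, n_j\Theta_{n_j}\mid\mathcal{F}_{n_j})\ge p$ at the critical scale is thus exactly the hard quantitative content, and your proposal leaves it unproved. Moreover, even granting it, your conclusion overreaches: conditional Borel--Cantelli gives ``gain a factor $\kappa$ infinitely often,'' which does \emph{not} imply $n_j\Theta_{n_j}\to\infty$, nor even $\limsup_j n_j\Theta_{n_j}=\infty$, since between good windows the order parameter can fall back to the floor; you would need an additional compounding argument ($k$ consecutive gains with probability $\ge p^k$, started from the floor at \emph{random} white-draw times --- your deterministic window starts $n_j$ carry no guaranteed floor). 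Finally, you invoke $\P(\M)=0$ rather than proving it, though your own Borel--Cantelli remark plus $\Theta_n\ge 1/\tau_n$, $1-\Theta_n\ge 1/\tau_n$ and \eqref{new2} closes that gap trivially.

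The paper avoids both difficulties with a much weaker, and therefore safe, comparison: using \eqref{rc1} and \eqref{rc2} it shows the floor $\Theta_i\ge\lambda/i$ persists throughout a window started at a white draw $\gamma_n$, couples only the \emph{indicators} $I_i$ with independent Bernoulli$(\lambda/i)$ variables from below, and gets via Chebyshev and L\'evy's Borel--Cantelli at least $\tfrac{\lambda}{3}\log g(\gamma_n)$ white draws in infinitely many windows. Since the gain from a \emph{single} window is then of order $\log g(\gamma_n)\to\infty$, the paper obtains $\limsup_n n\Theta_n=\infty$ outright, with no compounding across windows needed --- this is precisely what your fixed-$\kappa$ scheme is missing. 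The final step is also not the soft ``summable relative variance'' statement you gesture at: the paper proves a separate Proposition~\ref{p:delta} ($\limsup_n\Theta_n/\delta_n=\infty$ a.s.\ implies $\P(\Theta=0)=0$, applied with $\delta_n\le b^2/n$) by iterating Laplace-transform inequalities; your relative-variance heuristic is the right intuition for it, but as written it is an assertion, not a proof.
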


\begin{remark} Condition~\eqref{new2} is satisfied by sequences $(\sigma_n)$ growing like $\log n$ or slower, or decaying. 
For growing sequences $(\sigma_n)$ satisfying~\eqref{new2},  the conditions \eqref{rc1} and~\eqref{rc2} are always 
fulfilled provided that the sequence is regular enough. 
They are  also  fulfilled for all bounded sequences. 
Overall, this theorem works for sufficiently regular sequences growing like $\log n$ or slower.
\end{remark}

\begin{remark}
In principle, Theorem~\ref{t:div} also works for sequences 
decaying like $n^{-\alpha}$, $\alpha<1$, or slower. 
However, we have a much simpler Theorem~\ref{t:zero} for the case when $\sigma_n\to 0$. 
\end{remark}

\begin{theorem} 
\label{t:zero}
If $\sigma_n\to 0$ as $n\to\infty$ then
$\P(\mathcal{M})=\P(\mathcal{D})=0$.
\end{theorem}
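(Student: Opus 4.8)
\emph{Proof idea.} Since $\mathcal{M}\subset\mathcal{D}$, it is enough to show $\P(\mathcal{D})=0$, and since $\mathcal{D}=\{\Theta=0\}\cup\{\Theta=1\}$ with the two events interchanged under swapping the colours, it suffices to prove $\P(\Theta=1)=0$. If $\tau_n\to\tau_\infty<\infty$ this is immediate: $T_n$ and $\tau_n-T_n$ are non-decreasing with $T_n\ge T_0>0$ and $\tau_n-T_n\ge\tau_0-T_0>0$, so $\Theta=T_\infty/\tau_\infty\in(0,1)$ surely. Hence we may assume $\tau_n\to\infty$. As $\sigma_n\to0$ forces $\tau_n=o(n)$ and therefore $\sum_n\tau_n^{-1}=\infty$, the first part of Theorem~\ref{t:div} already gives $\P(\mathcal{M})=0$; the substance of the theorem is the stronger statement about $\mathcal{D}$.

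The plan for $\P(\Theta=1)=0$ is to analyse the positive martingale $p_n:=1-\Theta_n=(\tau_n-T_n)/\tau_n\in(0,1)$ as a discrete stochastic exponential. From $\Theta_{n+1}-\Theta_n=\frac{\sigma_{n+1}}{\tau_{n+1}}(I_{n+1}-\Theta_n)$ one finds that $\log p_n$ jumps by $\log(1-c_n)$ when a white ball is drawn and by $\log\!\big(1+c_n\frac{\Theta_n}{1-\Theta_n}\big)$ when a black ball is drawn, where $c_n=\sigma_{n+1}/\tau_{n+1}$. The decisive consequence of $\sigma_n\to0$ is that the relative jumps of the minority count are uniformly small, $\sigma_{n+1}/(\tau_n-T_n)\le\sigma_{n+1}/(\tau_0-T_0)\to0$, so $\log p_n$ is a small-jump semimartingale and admits a Doob-type decomposition $\log p_n=\log p_0+G_n-\tfrac12\langle G\rangle_n+r_n$, with $G_n$ a martingale, predictable quadratic variation $\langle G\rangle_n=\sum_{k<n}v_k$ where $v_k\asymp c_k^2\,\frac{\Theta_k}{1-\Theta_k}$, and a remainder $r_n=o(\langle G\rangle_n)+O(1)$ coming from the vanishing jumps. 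The usual dichotomy then applies: on $\{\langle G\rangle_\infty<\infty\}$ the martingale $G_n$ converges and $\log p_n$ tends to a finite limit, so $p_\infty>0$; on $\{\langle G\rangle_\infty=\infty\}$ the strong law for martingales gives $G_n+r_n=o(\langle G\rangle_n)$ and the $-\tfrac12\langle G\rangle_n$ term drives $\log p_n\to-\infty$, so $p_\infty=0$. Thus, up to a null set,
\[
\{\Theta=1\}=\Big\{\textstyle\sum_n c_n^2\,\frac{\Theta_n}{1-\Theta_n}=\infty\Big\},
\]
and everything reduces to showing that this series converges almost surely.

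The main obstacle is precisely this last step, and it cannot be settled in expectation: one checks that $\E\big[\frac{\Theta_n}{1-\Theta_n}\big]$ may grow like a positive power of $\tau_n$, so that $\sum_n c_n^2\,\E\big[\frac{\Theta_n}{1-\Theta_n}\big]$ can diverge while the series itself is finite almost surely. A pathwise argument is therefore needed, and it must quantify how fast the minority colour accumulates. Here I would again exploit $\sigma_n\to0$ through a doubling-window decomposition: on each block $\{n:2^m\le\tau_n<2^{m+1}\}$ the number of steps, and hence the conditional number of black draws, tends to infinity, so that by a law-of-large-numbers effect the multiplicative increment of $\tau_n-T_n$ across a block concentrates and contributes only a summable amount to $\langle G\rangle_\infty$, provided $\tau_n-T_n\to\infty$ at a controlled rate. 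Establishing this lower bound on the growth of the minority count—equivalently, controlling the fluctuations of $\log p_n$ across the blocks—is the crux of the argument. It is the one place where the behaviour on $\mathcal{D}\setminus\mathcal{M}$ must be understood, and it is exactly the hypothesis $\sigma_n\to0$, forcing both $\sum_n\tau_n^{-1}=\infty$ and the vanishing of the jumps, that makes this tractable.
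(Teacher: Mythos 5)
Your reductions are sound: the symmetry reduction to $\P(\Theta=1)=0$, the convergent-$(\tau_n)$ case, and deducing $\P(\mathcal{M})=0$ from $\sigma_n\to0\Rightarrow\tau_n=o(n)\Rightarrow\sum_n\tau_n^{-1}=\infty$ via the first part of Theorem~\ref{t:div} are all correct. The stochastic-exponential dichotomy, identifying $\{\Theta=1\}$ up to null sets with $\big\{\sum_n c_n^2\,\Theta_n/(1-\Theta_n)=\infty\big\}$, is also plausible, since $\sigma_n\to0$ does make the jumps of $\log p_n$ uniformly small (the upward jump is at most $\sigma_{n+1}/(\tau_0-T_0)$). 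But the proposal has a genuine gap exactly where you flag it: you never prove that $\sum_n c_n^2\,\Theta_n/(1-\Theta_n)<\infty$ almost surely, and this is the entire content of the theorem, not a completion detail. On the event you are trying to kill, $\Theta_n\to1$, the weight $\Theta_n/(1-\Theta_n)=T_n/(\tau_n-T_n)$ is unbounded, and the series behaves like $\sum_n\sigma_n^2/\big(\tau_n(\tau_n-T_n)\big)$, so its convergence requires a quantitative lower bound on the growth of the minority count $\tau_n-T_n$, i.e.\ on the rate at which $\Theta_n$ could approach $1$. Your doubling-window, law-of-large-numbers sketch is a heuristic, not an argument: the claimed concentration across a block presupposes the very lower bound on $\tau_n-T_n$ it is meant to establish, and the paper itself remarks that rate-of-convergence information of precisely this kind (for $\Theta_n\to0$ on $\mathcal{D}\setminus\mathcal{M}$) is currently out of reach. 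So as written the proof reduces the theorem to an unproven claim of comparable difficulty.

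For contrast, the paper's proof avoids pathwise rate control entirely. Its key tool is Proposition~\ref{p:delta}, a Laplace-transform iteration showing that $\P(\Theta=0)=0$ whenever $\limsup_{n\to\infty}\Theta_n/\delta_n=\infty$ almost surely, with $\delta_n=\sum_{i>n}(\sigma_i/\tau_i)^2$. Under $\sigma_n\to0$ and $\tau_n\to\infty$, two deterministic one-line bounds finish the job: trivially $\Theta_n\ge T_0/\tau_n$, while $\delta_n\le\tau_n^{-1}\sup_{i>n}\sigma_i$ by comparison of $\sum_{i>n}\sigma_i/\tau_i^2$ with $\int_{\tau_n}^{\infty}x^{-2}\,dx$; hence $\Theta_n/\delta_n\ge T_0\big[\sup_{i>n}\sigma_i\big]^{-1}\to\infty$ surely, and $\P(\mathcal{M})=0$ follows from $\mathcal{M}\subset\mathcal{D}$ (Lemma~\ref{l:ml}). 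If you want to salvage your route, the realistic fix is to replace the unproven almost-sure series estimate by an appeal to Proposition~\ref{p:delta}; at that point the semimartingale decomposition of $\log p_n$ becomes unnecessary.
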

\smallskip

Another time-inhomogeneous model of a Polya urn was studied 
in~\cite{davis}. In that model
$\sigma_n$ white balls were added to the urn the $n$-th time a white ball was drawn, and $\hat \sigma_n$ black balls were added the $n$-th time a black ball was drawn. That model, however, exhibits a very different behaviour and can be treated using Rubin's exponential embedding, which is not applicable to our model.
\smallskip

The paper is organised as follows.
\smallskip

We begin by proving Theorem~\ref{t:con}. 
For the first statement we simply show that the probability of never adding a black ball is positive if~\eqref{new5} holds. To justify the second statement we observe that if no monopoly occurs then white balls will be added to the urn infinitely often. At each time $n$ when white balls are added their proportion $\Theta_n$ becomes at least $\frac{\sigma_n}{\tau_n}$ and hence cannot converge to zero according to~\eqref{new6}. By a symmetric argument it cannot converge to one either, implying $\mathcal{M}=\mathcal{D}$. It remains to notice that $\P(\mathcal{D})=1$ since~\eqref{new6} implies~\eqref{serser}.    
\smallskip

Further, for each $n\in\N_0$, we denote 
\begin{align*}
\delta_n=\sum_{i=n+1}^{\infty}\Big(\frac{\sigma_i}{\tau_i}\Big)^2\in (0,\infty].
\end{align*}
In key Proposition~\ref{p:delta} we show that if $\Theta_n$ decays slower 
than~$\delta_n$ then it actually does not decay at all. 
\smallskip

Then we turn to the proof of Theorem~\ref{t:div}. The absence of monopoly follows from~\eqref{new2} by a Borel-Cantelli type argument. Further, if no monopoly occurs then there will be infinitely many times $n$ when white balls are added to the urn. At each such time $\Theta_n$ becomes at least 
$\frac{\sigma_n}{\tau_n}\asymp \frac 1 n$. This remains true for all $n\le i<ng(n)$, that is, $\Theta_i>\frac{\lambda}{i}$, with some 
$\lambda>0$. This implies that the average number of times, between $n$ and $ng(n)$, when white balls were added to the urn is greater than 
\begin{align*}
\lambda\sum_{i=n}^{ng(n)-1}\frac{1}{i}\ge \lambda\int_n^{ng(n)}\frac{dx}{x}
=\lambda\log g(n).
\end{align*} 
This enables us to show that $\Theta_{ng(n)}$ decays slower than 
$\delta_{ng(n)}$ along a subsequence, and the rest of the proof follows from Proposition~\ref{p:delta}.
\smallskip

We conclude the paper by proving Theorem~\ref{t:zero}. We show that $\delta_n$ decays faster than $\frac{1}{\tau_n}$ and $\Theta_n$ decays slower than $\frac{1}{\tau_n}$. The rest follows from Proposition~\ref{p:delta}.

\bigskip


\section{Proofs}

\begin{lemma} 
\label{l:ml}
$\mathcal{M}\subset\mathcal{D}$.
\end{lemma}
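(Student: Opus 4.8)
The plan is to prove the equivalent statement $\P(\mathcal M\setminus\mathcal D)=0$, i.e. that the monopoly event is, up to a null set, contained in $\{\Theta\in\{0,1\}\}$. Since $\mathcal M$ and $\mathcal D$ are each symmetric unions of a ``white'' and a ``black'' part, it suffices to treat one of them: I will show $\{I_n=0\text{ eventually}\}\subseteq\{\Theta=0\}$ almost surely, and then the inclusion $\{I_n=1\text{ eventually}\}\subseteq\{\Theta=1\}$ follows by interchanging the roles of the two colours (that is, replacing $\Theta_n$ by $1-\Theta_n$ throughout).

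The engine of the proof is the conditional (second) Borel--Cantelli lemma applied to the adapted events $A_{n+1}=\{I_{n+1}=1\}\in\mathcal F_{n+1}$. Since $I_{n+1}$ is Bernoulli with parameter $\Theta_n$ and independent of $\mathcal F_n$, we have $\P(A_{n+1}\mid\mathcal F_n)=\Theta_n$, and L\'evy's extension of Borel--Cantelli gives
\begin{align*}
\{I_n=1\text{ infinitely often}\}=\Big\{\sum_{n=0}^\infty\Theta_n=\infty\Big\}\qquad\text{a.s.}
\end{align*}
Passing to complements, $\{I_n=0\text{ eventually}\}=\{\sum_n\Theta_n<\infty\}$ a.s. On the latter event summability forces $\Theta_n\to0$, and since $\Theta_n\to\Theta$ a.s. we conclude $\Theta=0$, which is exactly the inclusion I want. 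This argument needs no hypothesis on $(\tau_n)$, so it covers the case $\tau_n\to\infty$ and the case $\tau_n\uparrow\tau_\infty<\infty$ simultaneously.

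If one prefers to avoid invoking L\'evy's lemma, the same conclusion can be reached by an elementary computation in the only genuinely delicate case, namely $\tau_n\uparrow\tau_\infty<\infty$; when $\tau_n\to\infty$ the inclusion is immediate, as freezing the white count at a finite value $T_N$ gives $\Theta_n=T_N/\tau_n\to0$. Writing $G_N=\{I_n=0\text{ for all }n>N\}$, the white count is frozen at $T_N$ on $G_N$, so conditionally on $\mathcal F_N$
\begin{align*}
\P(G_N\mid\mathcal F_N)=\prod_{m=N}^\infty\Big(1-\frac{T_N}{\tau_m}\Big).
\end{align*}
On $\{T_N>0\}$ we have $T_N/\tau_m\to T_N/\tau_\infty>0$, hence $\sum_m T_N/\tau_m=\infty$ and the product vanishes, giving $\P(G_N\cap\{T_N>0\})=0$. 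Since on $G_N$ the limit is $\Theta=T_N/\tau_\infty$, which lies in $(0,1)$ only when $T_N>0$, summing over $N$ shows $\P(\{I_n=0\text{ eventually}\}\cap\{\Theta\in(0,1)\})=0$.

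The main obstacle is conceptual rather than computational: one must recognise that monopoly cannot coexist with a surviving minority colour, because as long as both colours are present and $\tau_n$ does not blow up, the drawing probabilities $\Theta_n$ and $1-\Theta_n$ stay bounded away from $0$, so the minority colour is redrawn infinitely often with probability one. Making this precise is exactly the content of the identity $\{I_n=0\text{ eventually}\}=\{\sum_n\Theta_n<\infty\}$; the only care needed is in the almost-sure bookkeeping and, in the elementary route, in justifying the infinite-product formula for $\P(G_N\mid\mathcal F_N)$ as the monotone limit of its finite truncations.
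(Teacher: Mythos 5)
Your proposal is correct and its main argument is exactly the paper's: Lévy's extension of the Borel--Cantelli lemma gives $\{I_n=0\text{ eventually}\}=\big\{\sum_{n}\Theta_n<\infty\big\}$ almost surely, summability forces $\Theta_n\to 0$, and hence $\Theta=0$, with the other colour handled by symmetry. Your supplementary elementary computation of $\P(G_N\mid\mathcal{F}_N)$ as a divergent infinite product in the case $\tau_n\uparrow\tau_\infty<\infty$ is a sound alternative to invoking Lévy's lemma there, but it is an optional extra rather than a genuinely different route.
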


\begin{proof} Observe that if $\tau_n\to\infty$ the statement is obvious. For the general case 
by L\' evy's extension of the Borel-Cantelli Lemmas, see~\cite[\S 12.15]{DW}, we have 
\begin{align*}
\big\{I_n=0\text{ eventually for all }n\big\}
=\Big\{\sum_{n=1}^{\infty}\E_{\mathcal{F}_n}I_{n+1}<\infty\Big\}
=\Big\{\sum_{n=1}^{\infty}\Theta_n<\infty\Big\}\subset \big\{\Theta=0\big\},
\end{align*}
as required.
\end{proof}
\bigskip

\begin{proof}[Proof of Theorem~\ref{t:con}]
By symmetry and by Lemma~\ref{l:ml} it suffices to prove that 
\begin{align*}
\P(I_n=0\text{ eventually for all }n)>0.
\end{align*}
We have by~\eqref{new5}
\begin{align*}
\P(I_n=0\text{ eventually for all }n)
&\ge \P(I_n = 0\text{ for all }n)
=\prod_{n=0}^{\infty}\P(I_{n+1}=0|I_1=0,\cdots,I_{n}=0)\\
&=\prod_{n=0}^{\infty}\Big(1-\frac{T_0}{\tau_n}\Big)
=\exp\Big\{\sum_{n=0}^{\infty}\log\Big(1-\frac{T_0}{\tau_n}\Big)\Big\}>0
\end{align*}
since $\Theta_{n}=\frac{T_0}{\tau_n}$ on the event $\{I_1=0,\cdots,I_{n}=0\}$.
\smallskip

Suppose condition~\eqref{new6} is satisfied. Then 
\begin{align*}
\liminf_{n\to\infty}\frac{\sigma_{n+1}}{\tau_n}\ge \liminf_{n\to\infty}\frac{\sigma_{n+1}}{\tau_{n+1}}>0
\end{align*}
implying~\eqref{serser} and $\P(\mathcal{D})=1$.
\smallskip

For all $\omega\in\mathcal{M}^c$ there is a random sequence of indices $(\eta_n)$ such that 
$I_{\eta_n}=1$. Then 
\begin{align*}
\Theta_{\eta_n}=\frac{T_{\eta_n-1}+\sigma_{\eta_n}}{\tau_{\eta_n}}\ge \frac{\sigma_{\eta_n}}{\tau_{\eta_n}}.
\end{align*}
This implies 
\begin{align*}
\Theta\ge \liminf_{n\to\infty}\frac{\sigma_n}{\tau_n}>0.
\end{align*}
By a symmetric argument we also have $\Theta<1$. 
Hence $\mathcal{M}^c\subset \mathcal{D}^c$. Together with Lemma~\ref{l:ml} this implies $\mathcal{M}=\mathcal{D}$
and $\P(\mathcal{M})=\P(\mathcal{D})=1$.
\end{proof}
\bigskip

\begin{lemma} 
\label{bounded}
There exists $c>0$ such that 
\begin{align*}
1-p+pe^{-x}\le e^{-px+cpx^2}
\end{align*}
for all $p\in (0,1)$ and all $x> 0$. 
\end{lemma}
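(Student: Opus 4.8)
The plan is to take logarithms and reduce the two-variable inequality to a single-variable one. Since $1-p+pe^{-x}=1+p(e^{-x}-1)\ge 1-p>0$ for $p\in(0,1)$, both sides are strictly positive and the claim is equivalent to
\begin{align*}
\log\bigl(1+p(e^{-x}-1)\bigr)\le -px+cpx^2.
\end{align*}

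First I would apply the elementary bound $\log(1+u)\le u$, valid for all $u>-1$. Setting $u=p(e^{-x}-1)$, observe that $u\in(-p,0)\subset(-1,0)$ for $x>0$ and $p\in(0,1)$, so the bound applies and gives
\begin{align*}
\log\bigl(1+p(e^{-x}-1)\bigr)\le p(e^{-x}-1).
\end{align*}
Thus it suffices to prove $p(e^{-x}-1)\le -px+cpx^2$, and here the crucial point is that the factor $p$ cancels: dividing by $p>0$, the inequality becomes the $p$-free statement
\begin{align*}
e^{-x}\le 1-x+cx^2,\qquad x>0.
\end{align*}

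Finally I would establish this single-variable inequality by Taylor's theorem with Lagrange remainder. Expanding $e^{-x}$ to second order about $0$ gives $e^{-x}=1-x+\tfrac12 e^{-\xi}x^2$ for some $\xi\in(0,x)$; since $e^{-\xi}<1$ this yields $e^{-x}\le 1-x+\tfrac12 x^2$. Hence $c=\tfrac12$ works, and chaining the three inequalities and exponentiating completes the proof.

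The argument is a short chain of standard estimates, so there is no serious obstacle; the only points to check are that the logarithm is well defined (which follows from $p<1$, guaranteeing positivity of the argument) and that, after applying $\log(1+u)\le u$, the dependence on $p$ factors out. This decoupling is what reduces the two-parameter problem to the clean Taylor bound that pins down the constant $c=\tfrac12$.
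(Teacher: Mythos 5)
Your proof is correct and follows essentially the same route as the paper: both take logarithms and apply $\log(1+u)\le u$ to cancel the factor $p$, reducing the claim to the single-variable inequality $e^{-x}\le 1-x+cx^2$ for $x>0$. The only difference is in the final step, where the paper just observes that $\sup_{x>0}\frac{x-1+e^{-x}}{x^2}$ is finite (continuity plus finite limits at $0$ and $\infty$), whereas your Taylor--Lagrange argument additionally pins down the explicit constant $c=\tfrac12$.
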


\begin{proof} 
Using $\log(1+x)\le x$ we have 
\begin{align*}
\sup_{x> 0, p\in (0,1)}\frac{x+p^{-1}\log(1-p+pe^{-x})}{x^2}
\le \sup_{x> 0}\frac{x-1+e^{-x}}{x^2}<\infty,
\end{align*}
as the function under the supremum is continuous and tends to finite limits at zero and  infinity. It is easy to see that this is equivalent to the statement of the lemma.
\end{proof}
\bigskip

\begin{prop} 
\label{p:delta}
Suppose
\begin{align}
\label{mas}
\limsup_{n\to\infty}\frac{\Theta_n}{\delta_n}= \infty
\end{align}
almost surely. Then $\P(\Theta=0)=0$. 
\end{prop}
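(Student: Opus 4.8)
The plan is to turn $e^{-\lambda\Theta_n}$ into an exponential supermartingale whose quadratic correction is governed by $\delta_n$ through Lemma~\ref{bounded}, and then to extract from hypothesis~\eqref{mas} a ladder of stopping times at which the resulting exponent becomes a large multiple of $\Theta/\delta$. First I would record the one-step dynamics: from $T_{n+1}=T_n+\sigma_{n+1}I_{n+1}$ and $\tau_{n+1}=\tau_n+\sigma_{n+1}$ we have
\[
\Theta_{n+1}=\Big(1-\tfrac{\sigma_{n+1}}{\tau_{n+1}}\Big)\Theta_n+\tfrac{\sigma_{n+1}}{\tau_{n+1}}I_{n+1}.
\]
Conditioning on $\mathcal F_n$, using that $I_{n+1}$ is Bernoulli$(\Theta_n)$, and applying Lemma~\ref{bounded} with $p=\Theta_n$ and $x=\lambda\frac{\sigma_{n+1}}{\tau_{n+1}}$ gives, with the constant $c$ of that lemma,
\[
\E\big[e^{-\lambda\Theta_{n+1}}\mid\mathcal F_n\big]\le\exp\Big(-\lambda\Theta_n+c\lambda^2\big(\tfrac{\sigma_{n+1}}{\tau_{n+1}}\big)^2\Theta_n\Big).
\]
The factor $\Theta_n$ in the correction is essential. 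It follows that for any fixed index $N$ the process
\[
Y_n=\exp\Big(-\lambda\Theta_n-c\lambda^2\sum_{i=N+1}^n\big(\tfrac{\sigma_i}{\tau_i}\big)^2\Theta_{i-1}\Big),\qquad n\ge N,
\]
is a supermartingale with values in $(0,1]$, and the same holds when $N$ is replaced by a stopping time.

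The naive estimate $\Theta_{i-1}\le1$ in the correction, followed by optional stopping and optimisation in $\lambda$, would only yield a bound of order $\exp(-\Theta_N^2/(4c\delta_N))$. This is too lossy: \eqref{mas} produces times with $\Theta_N/\delta_N$ large, but $\Theta_N\to0$ on $\{\Theta=0\}$, so the square kills the estimate. The remedy, which is the heart of the argument, is to stop $Y$ before $\Theta$ can grow past twice its current value, so that the accumulated correction is at most $2\,(\text{level})\,\delta_N$ and the exponent becomes a genuine multiple of $\Theta_N/\delta_N$. Concretely I would fix $R>0$, set $\nu_0=\inf\{n:\Theta_n>R\delta_n\}$, and define a doubling ladder $\nu_{k+1}=\inf\{n>\nu_k:\Theta_n>2\Theta_{\nu_k}\}$. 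On $\{\Theta=0\}$ hypothesis~\eqref{mas} forces $\nu_0<\infty$; since $\Theta_{\nu_k}>2^k\Theta_{\nu_0}$ and $\Theta\le1$, only finitely many $\nu_k$ can be finite, so $\{\Theta=0\}$ splits into the disjoint events $\{\Theta=0,\ \nu_k<\infty,\ \nu_{k+1}=\infty\}$, $k\ge0$.

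On $\{\nu_{k+1}=\infty\}$ one has $\Theta_n\le2\Theta_{\nu_k}$ for all $n>\nu_k$, whence the correction accumulated after $\nu_k$ is at most $2\Theta_{\nu_k}\delta_{\nu_k}$. Restarting $Y$ at the stopping time $\nu_k$, applying optional stopping together with conditional Fatou, bounding $Y_\infty\ge\exp(-2c\lambda^2\Theta_{\nu_k}\delta_{\nu_k})$ on $\{\Theta=0,\ \nu_{k+1}=\infty\}$ (where $\Theta=0$), and taking the infimum over $\lambda>0$ gives, on $\{\nu_k<\infty\}$,
\[
\P\big(\Theta=0,\ \nu_{k+1}=\infty\mid\mathcal F_{\nu_k}\big)\le\exp\Big(-\tfrac{\Theta_{\nu_k}}{8c\,\delta_{\nu_k}}\Big)\le e^{-2^kR/(8c)},
\]
the last inequality using $\Theta_{\nu_k}>2^k\Theta_{\nu_0}>2^kR\delta_{\nu_0}\ge2^kR\delta_{\nu_k}$. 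Summing over $k$ yields $\P(\Theta=0)\le\sum_{k\ge0}e^{-2^kR/(8c)}$, and letting $R\to\infty$ forces $\P(\Theta=0)=0$.

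The routine parts are the one-step Laplace estimate and the optional-stopping bookkeeping for the bounded nonnegative supermartingale restarted at each $\nu_k$. The one genuinely delicate point is the passage from $\Theta_N^2/\delta_N$ to $\Theta_N/\delta_N$: one must not let the supermartingale run freely but chop the trajectory at the geometric doubling times $\nu_k$, whose termination is conveniently guaranteed by the trivial bound $\Theta\le1$.
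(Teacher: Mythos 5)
Your proposal is correct, and after the shared opening move it takes a genuinely different route from the paper. Both arguments begin identically: the recursion $\Theta_n=\frac{\tau_{n-1}}{\tau_n}\Theta_{n-1}+\frac{\sigma_n}{\tau_n}I_n$ together with Lemma~\ref{bounded} applied with $p=\Theta_{n-1}$, retaining the crucial factor $\Theta_{n-1}$ in the quadratic correction. But the paper exploits that factor algebraically rather than pathwise: since the resulting bound is $\E\exp\big\{-\big(\lambda-c\lambda^2(\sigma_n/\tau_n)^2\big)\Theta_{n-1}\big\}$, it is exactly the Laplace transform $f_{n-1}$ evaluated at a shifted argument (its inequality~\eqref{it7}), and iterating with the deterministic choice $\lambda_m=\frac{1}{2c\delta_m}$ gives $\P(\Theta=0)\le f(\lambda_m)\le\E\exp\{-\Theta_m/(4c\delta_m)\}$ for every fixed $m$, with no stopping times and no control of the future trajectory whatsoever. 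So the dichotomy you describe between the ``lossy'' exponent $\Theta_N^2/\delta_N$ and your remedy is real for your supermartingale formulation, but the doubling ladder you call the heart of the argument is avoidable: the argument-shift trick means the factor $\Theta_{i-1}$ never needs to be bounded at all, at any time, which makes the paper's proof shorter. Conversely, your localization buys something the paper's ending lacks. The paper concludes by extracting from the almost sure statement~\eqref{mas} ``a subsequence $(m_i)$'' with $\Theta_{m_i}/\delta_{m_i}\to\infty$ and inserting it into the fixed-$m$ bound; that subsequence is in principle $\omega$-dependent, and a sequence of $[0,1]$-valued variables can have almost sure liminf zero while all its expectations stay bounded away from zero, so this step is delicate as written. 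Your stopping-time version — conditioning on $\mathcal{F}_{\nu_k}$ at the random time where $\Theta_n>R\delta_n$, choosing $\lambda=\frac{1}{4c\delta_{\nu_k}}$ in an $\mathcal{F}_{\nu_k}$-measurable way, and using that $(\delta_n)$ is nonincreasing to get $\Theta_{\nu_k}/\delta_{\nu_k}>2^kR$ — handles the randomness of these times airtight, and moreover yields the quantitative tail bound $\P(\Theta=0)\le\sum_{k\ge0}e^{-2^kR/(8c)}$ from the mere occurrence of $\sup_n\Theta_n/\delta_n>R$. Your routine details check out: the compensated process is indeed a supermartingale (the $i=n+1$ term of the compensator cancels the one-step correction), the restart at $\nu_k$ is legitimate after freezing before the stopping time, the ladder terminates since $\Theta_{\nu_0}>R\delta_{\nu_0}>0$ and $\Theta_n\le1$, and \eqref{mas} guarantees both $\nu_0<\infty$ almost surely and the finiteness of all $\delta_n$.
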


\begin{proof} 
Observe that~\eqref{mas} implies that all $\delta_n$ are finite. 
Denote by 
\begin{align*}
f_n(\lambda)=\E e^{-\lambda \Theta_n}
\qquad\text{and}\qquad 
f(\lambda)=\E e^{-\lambda \Theta},
\qquad \lambda\in\R
\end{align*} 
the Laplace transforms of $\Theta_n$ and $\Theta$, respectively. 
For all $\lambda>0$, all $m$, and all $n>m$ 
using 
\begin{align*}
\Theta_n=\frac{T_{n-1}+\sigma_n I_n}{\tau_n}=\frac{\tau_{n-1}}{\tau_n}\Theta_{n-1}+\frac{\sigma_n}{\tau_n}I_n
\end{align*}
we have  
\begin{align*}
f_n(\lambda)
&=\E\Big[\exp\Big\{-\frac{\tau_{n-1}}{\tau_n}\lambda\Theta_{n-1}\Big\}\Big(1-\Theta_{n-1}+\Theta_{n-1}\exp\Big\{-\lambda\frac{\sigma_n}{\tau_n}\Big\}\Big)\Big].
\end{align*}
By Lemma~\ref{bounded} this implies 
\begin{align}
f_n(\lambda)
&\le\E\Big[\exp\Big\{-\lambda\Theta_{n-1}
+c\Big(\frac{\sigma_n}{\tau_n}\Big)^2\lambda^2\Theta_{n-1}
\Big\}\Big]
= f_{n-1}\Big(\lambda-c\Big(\frac{\sigma_n}{\tau_n}\Big)^2\lambda^2\Big).
\label{it7}
\end{align}

Let 
\begin{align*}
\lambda_m=\frac{1}{2c\delta_m}.
\end{align*}

Let us prove by induction over $k$ that  
\begin{align}
\label{indu}
f_n(\lambda_m)\le f_{n-k}\Big(\lambda_m-c\lambda_m^2\sum_{i=n-k+1}^{n}\Big(\frac{\sigma_i}{\tau_i}\Big)^2\Big)
\end{align}
for all $m$, all $n>m$ and all $1\le k\le n-m$. Indeed, for $k=1$ the statement follows from~\eqref{it7}. 
Further, suppose it is true for some $k$. 
Observe that 
\begin{align}
\label{bbb}
\lambda_m-c\lambda_m^2\sum_{i=n-k+1}^{n}\Big(\frac{\sigma_i}{\tau_i}\Big)^2
\ge \lambda_m-c\lambda_m^2\sum_{i=m+1}^{\infty}\Big(\frac{\sigma_i}{\tau_i}\Big)^2
=\lambda_m\big(1-c\lambda_m\delta_m\big)
=\frac{\lambda_m}{2}>0.
\end{align}
Hence using~\eqref{it7} and monotonicity of $f_{n-k-1}$ we obtain 
\begin{align*}
 f_{n-k}&\Big(\lambda_m-c\lambda_m^2\sum_{i=n-k+1}^{n}\Big(\frac{\sigma_i}{\tau_i}\Big)^2\Big)\\
& \le f_{n-k-1}\Big(\lambda_m-c\lambda_m^2\sum_{i=n-k+1}^{n}\Big(\frac{\sigma_i}{\tau_i}\Big)^2
 -c\Big(\lambda_m-c\lambda_m^2\sum_{i=n-k+1}^{n}\Big(\frac{\sigma_i}{\tau_i}\Big)^2\Big)^2\Big(\frac{\sigma_{n-k}}{\tau_{n-k}}\Big)^2\Big)\\
& \le f_{n-k-1}\Big(\lambda_m-c\lambda_m^2\sum_{i=n-k}^{n}\Big(\frac{\sigma_i}{\tau_i}\Big)^2\Big).
\end{align*}
This, together with the induction hypothesis~\eqref{indu} for $k$, completes the induction step.  
\smallskip

Substituting $k=n-m$ into~\eqref{indu} and using~\eqref{bbb} we obtain
\begin{align}
f_n(\lambda_m)
&\le f_{m}\Big(\lambda_m-c\lambda_m^2\sum_{i=m+1}^{n}\Big(\frac{\sigma_i}{\tau_i}\Big)^2\Big)
\le f_m \Big(\frac{\lambda_m}{2}\Big)
=\E \exp\Big\{-\frac{\Theta_m}{4c\delta_m}\Big\}
\label{lili7}
\end{align}
for all $m$ and all $n>m$. 
\smallskip

By the dominated convergence theorem we have  $f_n(\lambda)\to f(\lambda)$ 
for all $\lambda>0$.  Hence we can take the limit in~\eqref{lili7} to obtain 
\begin{align*}
\P(\Theta=0)\le
f(\lambda_m)\le \E \exp\Big\{-\frac{\Theta_m}{4c\delta_m}\Big\}
\end{align*}
for all $m$. 
By~\eqref{mas} there is a subsequence $(m_i)$ such that 
\begin{align*}
\lim_{i\to\infty}\frac{\Theta_{m_i}}{\delta_{m_i}}=\infty. 
\end{align*}
By the dominated convergence theorem we obtain 
\begin{align*}
\P(\Theta=0)\le\lim_{i\to\infty}\E \exp\Big\{-\frac{\Theta_{m_i}}{4c\delta_{m_i}}\Big\}=0
\end{align*}
as required. 
\end{proof} 
\bigskip

\begin{proof}[Proof of Theorem~\ref{t:div}] Suppose~\eqref{new2} is satisfied. Let us show that $\P(\mathcal{M})=0$. 
By symmetry it suffices to prove that 
\begin{align*}
\P(I_n=0\text{ eventually for all }n)=0.
\end{align*}
By L\' evy's extension of the Borel-Cantelli Lemmas, see~\cite[\S 12.15]{DW}, 
this is equivalent to 
\begin{align*}
\sum_{n=1}^{\infty}\E_{\mathcal{F}_{n}}I_{n+1}=\sum_{n=1}^{\infty}\Theta_n=\infty
\end{align*}
almost surely, which follows from 
\begin{align*}
\Theta_n=\frac{T_n}{\tau_n}\ge \frac{1}{\tau_n}
\end{align*}
and~\eqref{new2}.
\bigskip

Now suppose~\eqref{new2} and the regularity conditions~\eqref{rc1} and~\eqref{rc2} are satisfied.
Let us show that $\P(\mathcal{D})=0$.
By symmetry it suffices to prove that $\P(\Theta=0)=0$. To do so we will use Proposition~\ref{p:delta}.
\smallskip

It follows from~\eqref{rc1} that there are constants $a,b>0$ such that for all $n$
\begin{align*}
\frac a n<\frac{\sigma_n}{\tau_n}<\frac b n.
\end{align*}
This implies 
\begin{align}
\label{dede}
\delta_n=\sum_{i=n+1}^{\infty}\Big(\frac{\sigma_i}{\tau_i}\Big)^2\le b^2\sum_{i=n+1}^{\infty}\frac{1}{i^2}\le b^2\int_n^{\infty}\frac{dx}{x^2}=\frac{b^2}{n}.
\end{align}

For all $n$, denote $f(n)=ng(n)$. Let $\gamma_0=0$ and for all $n\in\N_0$
\begin{align*}
\gamma_{n+1}=\inf\big\{i>f(\gamma_{n}): I_i=1\big\}.
\end{align*}
By the first part of the theorem we know that  all $\gamma_n$ are finite almost surely. 
\smallskip

Observe that for all $n\in\N$
\begin{align}
\label{io2}
\Theta_{\gamma_n}= \frac{T_{\gamma_n-1}+\sigma_{\gamma_n}}{\tau_{\gamma_n}}\ge \frac{\sigma_{\gamma_n}}{\tau_{\gamma_n}}
>\frac{a}{\gamma_n}.
\end{align}

Further, for all $\gamma_n\le i< f(\gamma_n)$ it follows from~\eqref{io2} and~\eqref{rc2}  that 
\begin{align}
\Theta_i=\frac{T_i}{\tau_i}\ge \frac{T_{\gamma_n}}{\tau_i}\ge \Theta_{\gamma_n}\frac{\tau_{\gamma_n}}{\tau_i}
\ge  \frac{a}{\gamma_n}\cdot \frac{\tau_{\gamma_n}}{\sigma_{\gamma_n}}\cdot \frac{\sigma_i}{\tau_i}\cdot \frac{\sigma_{\gamma_n}}{\sigma_i}
\ge \frac{a}{\gamma_n}\cdot \frac{\gamma_n}{b}\cdot \frac{a}{i}\cdot \frac{1}{\beta}
= \frac{\lambda}{i},
\label{2i}
\end{align}
where $\lambda=\frac{a^2}{b\beta}$.
Denote 
\begin{align*}
Z_n=\sum_{i=\gamma_n+1}^{f(\gamma_n)}I_i.
\end{align*}
Let 
\begin{align*}
\hat Z_n=\sum_{i=\gamma_n+1}^{f(\gamma_n)}\hat I_{i},
\end{align*}
where $(\hat I_{i})$ is a sequence of independent Bernoulli 
random variables with parameters $\frac{\lambda}{i}$, respectively,
coupled with $(I_i)$ in such a way that 
\begin{align*}
I_i\ge \hat I_{i} \qquad \text{for all } \gamma_n<i\le f(\gamma_n).
\end{align*}
Observe that such a coupling is possible by~\eqref{2i} and it implies  that 
\begin{align}
\label{zz}
Z_n\ge \hat Z_n
\end{align}
for all $n$ almost surely. 
We have 
\begin{align*}
\E_{\mathcal{F}_{\gamma_n}}\hat Z_n
= \sum_{i=\gamma_n+1}^{f(\gamma_n)}\frac{\lambda}{i}
=\lambda \log g(\gamma_n)+o(1)
\end{align*}
and 
\begin{align*}
\text{Var}_{\mathcal{F}_{\gamma_n}} \hat Z_n
=  \sum_{i=\gamma_n+1}^{f(\gamma_n)}\frac{\lambda}{i}
=\lambda \log g(\gamma_n)+o(1)
\end{align*}
as $n\to\infty$, and the convergences are uniform in $\omega$ since $\gamma_n\ge n$ for all $n$ almost surely. By Markov's inequality and using~\eqref{zz} we have 
\begin{align*}
\P_{\mathcal{F}_{\gamma_n}}\Big(Z_n\le\frac \lambda 3 \log g(\gamma_n)\Big)
&\le \P_{\mathcal{F}_{\gamma_n}}\Big(\hat Z_n\le\frac \lambda 3 \log g(\gamma_n)\Big)
\le \P_{\mathcal{F}_{\gamma_n}}\Big(\Big|\hat Z_n-\E_{\mathcal{F}_{\gamma_n}}\hat Z_n\Big|\ge\frac \lambda 3 \log g(\gamma_n)\Big)\\
&\le \frac{9}{\lambda^2(\log g(\gamma_n))^2}\text{Var}_{\mathcal{F}_{\gamma_n}} \hat Z_n\le\frac{10}{\lambda\log g(\gamma_n)}<\frac 1 2
\end{align*}
eventually for all $n$ uniformly in $\omega$. This implies that 
\begin{align*}
\sum_{n=1}^{\infty}
\P_{\mathcal{F}_{\gamma_n}}\Big(Z_n>\frac \lambda 3 \log g(\gamma_n)\Big)\ge \sum_{n=1}^{\infty} \frac 1 2 =\infty.
\end{align*}
By L\' evy's extension of the Borel-Cantelli Lemmas, see~\cite[\S 12.15]{DW}, 
we obtain that almost surely 
\begin{align}
\label{io}
Z_n> \frac \lambda 3 \log g(\gamma_n)\quad \text{infinitely often.}
\end{align}
For all $n$ satisfying~\eqref{io} we have using~\eqref{rc1} and~\eqref{rc2}
\begin{align*}
f(\gamma_n) \Theta_{f(\gamma_n)}
&=\frac{f(\gamma_n)}{\tau_{f(\gamma_n)}}T_{f(\gamma_n)}\
\ge \frac{f(\gamma_n)}{\tau_{f(\gamma_n)}}\cdot \frac \lambda 3 \log g(\gamma_n)
\cdot \!\!\!\!\!\!\min_{\gamma_n<i\le f(\gamma_n)}\!\!\!\!\!\!\sigma_i\\
&\ge \frac{\sigma_{f(\gamma_n)}}{\tau_{f(\gamma_n)}}\cdot f(\gamma_n)\cdot \frac{\alpha\lambda}{3} \log g(\gamma_n)\cdot \frac{\sigma_{\gamma_n}}{\sigma_{f(\gamma_n)}}
\ge \frac{a\alpha\lambda}{3\beta} \log g(\gamma_n).
\end{align*} 
Together with~\eqref{io} and since $g(n)\to\infty$ this means that 
\begin{align}
\label{io1}
\limsup_{n\to\infty} n\Theta_n=\infty.
\end{align}

Combined with~\eqref{dede} this implies that 
\begin{align*}
\limsup_{n\to\infty}\frac{\Theta_n}{\delta_n}=\infty,
\end{align*} 
and $\P(\Theta=0)=0$ follows from Proposition~\ref{p:delta}.
\end{proof}

%
\bigskip

\begin{proof}[Proof of Theorem~\ref{t:zero}] 
By symmetry it suffices to prove that $\P(\Theta=0)=0$. If $(\tau_n)$ converges this is obvious. Suppose $\tau_n\to\infty$
as $n\to\infty$. 
\smallskip

Observe that, for all $n$,
\begin{align*}
\delta_n\le \sup_{i>n}\sigma_i \sum_{i=n+1}^{\infty}\frac{\sigma_i}{\tau_i^2}
\le \sup_{i>n}\sigma_i\int_{\tau_n}^{\infty}\frac{dx}{x^2}=\frac{1}{\tau_n}\sup_{i>n}\sigma_i.
\end{align*}
Further, for all $n$ almost surely we have 
\begin{align*}
\Theta_n=\frac{T_n}{\tau_n}\ge \frac{1}{\tau_n}.
\end{align*}
This implies 
\begin{align*}
\lim_{n\to\infty}\frac{\Theta_n}{\delta_n}\ge \lim_{n\to\infty}\big[ \sup_{i>n}\sigma_i\big]^{-1}=\infty.
\end{align*}
$\P(\Theta=0)=0$ follows now from Proposition~\ref{p:delta}.
\end{proof}
\bigskip


\end{document}